\renewcommand{\paragraph}{\roman{paragraph}}
\tikzstyle arrowstyle=[scale=1]
\tikzstyle directed=[postaction={decorate,decoration={markings, mark=at position .65 with {\arrow[arrowstyle]{stealth}}}}]
\tikzstyle reverse directed=[postaction={decorate,decoration={markings, mark=at position .65 with {\arrowreversed[arrowstyle]{stealth};}}}]
\newtheorem{theorem}{Theorem}[section]
\newtheorem{corollary}[theorem]{Corollary}
\newtheorem{definition}[theorem]{Definition}
\newtheorem{lemma}[theorem]{Lemma}
\newtheorem{proposition}[theorem]{Proposition}
\newtheorem{remark}[theorem]{Remark}
\newenvironment{proof}{\noindent {\bf Proof.}}{\rule{3mm}{3mm}\par\medskip}
\newtheorem{observation}[theorem]{Observation}
\begin{document}

\title{Hermitian adjacency matrices of mixed multigraphs \thanks{This work was supported by National Science Foundation of China (Grant No.~11901525, 12100557)
and by Zhejiang Provincial Natural Science Foundation of China (Grant No.~LQ21A010004).
}}

\author{Bo-Jun Yuan \ \ \ Shaowei Sun \ \ \  Dijian Wang
\thanks{Corresponding
author.
E-mail address: dijianwang0121@163.com (D.-J. Wang), ybjmath@163.com (B.-J. Yuan), sunshaowei2009@126.com (S.-W. Sun).}   \\
{\small  \it School of Science, Zhejiang University of Science and Technology, Hangzhou~310023, PR China}
}


\date{}
\maketitle

\begin{abstract}
A mixed multigraph  is obtained from an undirected  multigraph by orienting a subset of its edges.
In this paper, we study  a new Hermitian matrix representation of mixed multigraphs,
give an introduction to cospectral operations on mixed multigraphs, and characterize switching equivalent
mixed multigraphs in terms of fundamental cycle basis. As an application, an upper bound of cospectral classes of mixed multigraphs with the same underlying graph is obtained.
\end{abstract}

{\bf Keywords:} Mixed multigraph; Hermitian adjacency matrix; Switching equivalence; Cospectral class.

\section{Introduction}
All graphs considered in this paper may contain multiple edges but no loops.
A {\it mixed multigraph} $M$ is obtained from an undirected  multigraph $G$ by orienting
a subset of its edges, where $G$ is called the {\it underlying graph} of $M$, written as $G(M)$.
Formally, a mixed multigraph is an ordered triple $(V, E, A)$, where $V$ is the vertex set, $E$ is the undirected edge set and $A$ is the directed  edge (arc) set.
We shall denote an edge  with endpoints $u$ and $v$ by $\{u, v\}$ if it is undirected, $(u, v)$ if it is directed from the {\it initial vertex} $u$ to the {\it terminal vertex} $v$, and $uv$ if it is either undirected or directed from $u$ to $v$.
If $uv\in A$ and $vu\in A$, the pair $\{uv, vu\}$  of these oppositely directed arcs is referred to as a {\it digon} of $M$, denoted by $\{u,v\}$.
A mixed multigraph can be viewed as a directed graph (henceforth abbreviated as {\it digraph}) if we view each undirected edge as a digon.

One basic problem in spectral graph theory is investigating the relation  between the eigenvalues of some matrix representation of a graph and its structural properties.
In the current setting, it is natural to consider Hermitian matrices as the adjacency matrices of mixed graphs, in which
two conjugate complex numbers are used to reflect the directed information of an edge. Based on this,
Liu and Li \cite{li} and independently by Guo and Mohar \cite{mohar1} introduced the Hermitian adjacency matrix of the first kind for simple mixed graphs, in which
the $uv$-entry is $1$ if $\{u, v\}$ is undirected, is $\mathsf{i}$ $(=\sqrt{-1})$ if $(u,v)$ is an arc, is $-\mathsf{i}$ if $(v,u)$ is an arc, and is $0$ otherwise.
In recent years, the Hermitian spectrum of simple mixed graphs has been the subject of several publications. One can refer to
the literature \cite{Chen1, Chen2, Greaves, Hu, Tian, Wang, Wissing, Yu} and the references therein.
However, it seems that this kind of Hermitian matrix with entries from \{0, 1, $\pm\mathsf{i}$\} is more valid for simple mixed graphs.
As such, the  entries of Hermitian matrices of mixed multigraphs (or digraphs) may need to be restricted on a bigger
algebra system.  Mohar \cite{mohar00} provided an interesting new choice as follows.

Let $D$ be a digraph with vertex set $V$ and arc set $A$.
Let $\alpha=a+b\mathsf{i}$ be a complex number satisfying $|\alpha|=1$ and $a\geq 0,$ and let $\bar{\alpha}=a-b\mathsf{i}$ be its conjugate.
Mohar \cite{mohar00} introduced the {\it Hermitian adjacency matrix} $N^\alpha(D)=[N^\alpha_{uv}]\in \mathbb{C}^{V \times V}$ of $D$ with $$N^\alpha_{uv}=e(u, v)\alpha+e(v, u)\bar{\alpha},$$
where $e(x, y)$ denotes the number of arcs directed from $x$ to $y.$

If we take $\alpha=\mathsf{i} =\sqrt{-1}$, then the corresponding entries of $N^\alpha(D)$ are the Gaussian integers.
In the current setting, digraphs are maybe not in bijective correspondence with their matrices.
For instance, the matrix of digraph consisting of a digon is a null matrix of order 2,  which is the same as the matrix of digraph consisting of two isolated vertices.
Thus, in a sense, taking  the primitive sixth root of unity $\omega=\frac{1}{2}+\frac{\sqrt{3}}{2}\mathsf{i}$ as $\alpha$ may be the most natural choice to characterize the interesting combinatorial structure of a digraph (or a mixed multigraph) by its spectrum.
Note that a digon has the contribution 1 to the corresponding entries in $N^\omega$  because $\omega+\overline{\omega}=1$, which can be naturally considered as the contribution of an undirected edge if we view a digon
 as an undirected edge.
 In addition,  the entries of
Hermitian adjacency matrix  $N^\omega$  are Eisenstein integers which play an important role in formal theory of linear independence, see \cite{Chun, Whittle}.
It also seems to be associated with Quantum Field Theory from theoretical physics \cite{Kal}.

Let us come back to the matrix representation of mixed multigraphs. We emphasize here that since digons behave like undirected edges, for our purposes throughout this paper, we may assume that there are no digons in mixed multigraphs.
Equivalently, for any two adjacent vertices $u$ and $v$, the set of edges between $u$ and $v$ consists of  undirected edges and
arcs with the same direction.
In this way, the {\it Hermitian adjacency matrix} of a mixed multigraph $M=(V, E, A)$ is defined as
$N^\omega(M)=[N^\omega_{uv}]$ with
$$N^\omega_{uv}= \left\{
\begin{array}{ll}
e\{u,v\} & {\rm if \ there \  are \ only \ undirected \ edges \ between} \ u \ {\rm and} \ v;\\
e\{u,v\}+e(u,v)\omega & {\rm if \ there \  are \ arcs \ directed \ from} \ u \ {\rm to} \ v;\\
e\{u,v\}+e(v,u)\overline{\omega} & {\rm if \ there \  are \ arcs \ directed \ from} \ v \ {\rm to} \ u;\\
0 & {\rm otherwise,}
\end{array}
\right.
$$
where $e\{x, y\}$ denotes the number of undirected edges between vertices $x$ and $y$ and $e(x, y)$ denotes the number of arcs directed from $x$ to $y.$ 

Hereafter, we shall briefly denote $N(M)$ by the Hermitian adjacency matrix of a mixed multigraph $M$ instead of the notation $N^\omega(M).$
Obviously, $N(M)$ is a Hermitian matrix, 
and thus is diagonalizable with real eigenvalues.
The {\it eigenvalues}, the {\it spectrum}, and the {\it characteristic polynomial} $\mathnormal{\Phi}(M, \lambda)$ of
a mixed multigraph $M$ always refer to those of its Hermitian adjacency matrix $N(M)$.
Two mixed multigraphs $M$ and $M'$ are called {\it cospectral} if they have the same spectrum, and they
are called {\it antispectral} if for every  eigenvalue $\lambda$ of $M$  of multiplicity $\ell$, $-\lambda$ is an eigenvalue of $M'$ with the same multiplicity $\ell$.

The motivation for this paper is to  study  a new matrix representation of mixed multigraphs. In Section 2, we give the expression of characteristic polynomials of mixed multigraphs. In Section 3, we introduce  some
graphic operations which can preserve the spectrum of mixed multigraphs via its new Hermitian adjacency matrix. In Section 4, we characterize switching equivalent
mixed multigraphs in terms of fundamental cycle basis. As an application, an upper bound of cospectral classes of mixed multigraphs with the same underlying graph is obtained.

\section{Preliminaries}
\subsection{Characteristic polynomials of mixed multigraphs}
A mixed multigraph is called {\it simple} if its underlying graph is a simple graph.
It is said to be a {\it mixed walk}, {\it mixed cycle}, etc., if its underlying graph
is a walk, cycle, etc., respectively. 

Let $M=(V,E,A)$ be a connected mixed multigraph and let $N(M)=[N_{uv}]$ be its Hermitian adjacency matrix.
For  a mixed walk $W = v_1e_{1}v_2e_{2}v_3\cdots v_{s-1}e_{s-1}v_s$ in $M$,
the {\it weight} of $W$ in $M$ is defined as
$$wt(W)=N_{e_{1}}N_{e_{2}}\cdots N_{e_{s-1}},\eqno(2.1)$$
where, for each $i=1,2, \ldots, s-1$, $N_{e_{i}}=1$  if $e_{i}\in E$, $N_{e_{i}}=\omega$  if $e_{i}\in A$ is directed from $v_i$ to $v_{i+1}$, and $N_{e_{i}}=\overline{\omega}$  if $e_{i}\in A$ is directed from $v_{i+1}$ to $v_i$.
For a mixed cycle $C$, we need to assign a direction for $C$ before calculating its weight. Since $N(M)$ is Hermitian, if for one direction the  weight of a mixed cycle  is $\beta$, then for
the reversed direction its weight is $\bar{\beta}$  (the conjugate number of $\beta$).
For a given cycle direction of $C$,
assume that $f$ ($b$, resp.) is the number of forward (backward, resp.) arcs in $C$.
Now we express $\omega=\frac{1}{2}+\frac{\sqrt{3}}{2}\mathsf{i}$ in terms of
trigonometric function, that is, $\omega=\cos\theta+\mathsf{i}\sin\theta,$ where $\theta=\pi/3.$ In doing so, we may obtain a concise
expression for the characteristic polynomial of $M$ below.
We define the {\it value} of $C$ to be $wt(C)+\overline{wt(C)}$, denoted by $\nu(C)$. It follows that
\begin{eqnarray*}
\nu(C)&=&wt(C)+\overline{wt(C)} =\omega^f\overline{\omega}^b+\overline{\omega}^f\omega^b
=2{\rm Re}(\omega^f\overline{\omega}^b)\\
&=&2{\rm Re}\left((\cos(f\theta)+\mathsf{i}\sin(f\theta))\cdot(\cos(b\theta)-\mathsf{i}\sin(b\theta))\right)\\
&=&2\cos((f-b)\theta)=2\cos((f-b)\pi/3).
\end{eqnarray*}
Since $\nu(C)=2\cos((f-b)\theta)=2\cos((b-f)\theta)$, the value of an arbitrary mixed
cycle $C$ is independent of the chosen  direction of $C$.

Note that any mixed cycle $C$ satisfies $\nu(C)\neq 0$ as $f$  and $b$ in $C$ are non-negative integers (see Observation \ref{obser0}).
A subgraph of $M$ whose each component is an (undirected or directed) edge or a mixed cycle (including the mixed cycle of length $2$) is referred to as a {\it Sachs subgraph} of $M$.
We emphasize here that a single edge can not be enumerated as a mixed cycle of length $2$ when we consider the computation of characteristic polynomial of a mixed multigraph.

The following theorem can be considered as an  analogue of  Sachs' Coefficient Theorem \cite[Page 32]{cve}, and its corresponding version for simple mixed graphs appears in \cite[Theorem 2.6]{li1}.
\begin{theorem}\label{cha}
Let $M$ be a mixed multigraph of order $n$ with  the Hermitian  characteristic polynomial $\mathnormal{\Phi}(M, \lambda)=\sum_{i=0}^nc_i\lambda^{n-i}$. 
Then 
$$c_i=\sum_{S}(-1)^{r(S)}\prod_{C\subset S}\nu(C),$$
where the sum runs over all Sachs subgraphs $S$ of order $i$ in $M$, and $r(S)$ is the number of connected  components in  $S$.
\end{theorem}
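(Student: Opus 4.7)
The plan is to apply the classical Leibniz-expansion proof of Sachs' coefficient theorem, adapted to the Hermitian multigraph setting. Expanding $\det(\lambda I-N(M))$ by the Leibniz formula and extracting the coefficient of $\lambda^{n-i}$, one picks an $i$-subset $T\subseteq V$ where the off-diagonal entry $-N_{v,\sigma(v)}$ is chosen rather than the diagonal $\lambda$, and since $N_{vv}=0$ this forces $\sigma|_T$ to be a derangement of $T$. This gives
$$c_i=(-1)^i\sum_{|T|=i}\;\sum_{\sigma\in D(T)}\sgn(\sigma)\prod_{v\in T}N_{v,\sigma(v)},$$
where $D(T)$ denotes the set of derangements of $T$.

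Next I would expand each matrix entry as $N_{uv}=\sum_{e}c_e^{(uv)}$, where the sum is over the individual edges $e$ between $u$ and $v$ and $c_e^{(uv)}\in\{1,\omega,\overline{\omega}\}$ records the contribution of $e$ to $N_{uv}$ according to its type. Fully distributing the product $\prod_{v\in T}N_{v,\sigma(v)}$ then yields a sum over assignments of one specific edge to each pair $(v,\sigma(v))$; each cycle of $\sigma$ assembles its assigned edges into a closed walk in $M$, and the resulting configuration is precisely a Sachs subgraph $S$, every Sachs subgraph arising this way. The main combinatorial step is the component-by-component bookkeeping: for a $\sigma$-cycle $(uv)$ of length $2$, the expansion of $N_{uv}N_{vu}$ splits into ``same edge used twice'' contributions $|c_e^{(uv)}|^2=1$, each giving a single-edge component of $S$, and ``two distinct parallel edges'' contributions $\{e,f\}$ whose two orderings sum to $c_e^{(uv)}c_f^{(vu)}+c_f^{(uv)}c_e^{(vu)}=wt(C_{ef})+\overline{wt(C_{ef})}=\nu(C_{ef})$, giving a length-$2$ cycle component; for a $\sigma$-cycle of length $k\ge 3$, the two cyclic orderings are distinct derangements corresponding to the two traversal directions of the same mixed cycle $C$, and their contributions again sum to $wt(C)+\overline{wt(C)}=\nu(C)$.

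Finally I would verify the sign. Writing $\sgn(\sigma)=\prod_j(-1)^{k_j-1}$ over the cycle lengths of $\sigma$, if $S$ has $s$ single-edge components, $t$ length-$2$ cycle components, and $q$ cycles of length $\ge 3$ whose vertex count totals $K=i-2s-2t$, then $\sgn(\sigma)=(-1)^{s+t+K-q}=(-1)^{i-r(S)}$, which combines with the leading $(-1)^i$ to yield $(-1)^{r(S)}$. Multiplying by $\prod_{C\subset S}\nu(C)$ from the component analysis and summing over Sachs subgraphs $S$ of order $i$ gives the claimed formula. The main obstacle compared to the simple case is this multigraph bookkeeping inside $2$-cycles of $\sigma$: one must consistently separate the ``same edge'' and ``distinct parallel edges'' contributions in accordance with the paper's convention that a single edge is not enumerated as a length-$2$ mixed cycle.
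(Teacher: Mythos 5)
The paper does not actually supply a proof of Theorem \ref{cha}; it only cites the classical Sachs coefficient theorem and its simple mixed graph analogue. Your Leibniz-expansion argument is correct and is precisely the standard proof being invoked there: the sign computation $(-1)^i\sgn(\sigma)=(-1)^{r(S)}$ checks out, and the delicate multigraph point --- splitting each $2$-cycle of $\sigma$ into ``same edge twice'' terms of value $|c_e^{(uv)}|^2=1$ (single-edge components) versus paired ``distinct parallel edges'' terms summing to $\nu(C_{ef})$ (genuine length-$2$ mixed cycles) --- is handled exactly in accordance with the paper's convention.
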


\subsection{Applications of Theorem \ref{cha}}
As an application of Theorem \ref{cha}, we determine which mixed multigraphs can be  antispectral with their underlying graphs.
We start this subsection with the following observation.

\begin{observation} \label{obser0}
Let $C$ be a mixed cycle, and let $f$ $($$b$, resp.$)$ be the number of forward $($backward, resp.$)$ arcs in $C$ with arbitrary cycle direction.
By the definition of $\nu(C),$ we have
$$\nu(C)=2 \Leftrightarrow |f-b|\equiv 0 ({\rm mod} \ 6)\Leftrightarrow wt(C)=1;$$
$$\nu(C)=-2 \Leftrightarrow  |f-b|\equiv 3 ({\rm mod} \ 6)\Leftrightarrow wt(C)=-1;$$
$$\nu(C)=1 \Leftrightarrow |f-b|\equiv 1 ({\rm mod} \ 6) ~or~ |f-b|\equiv 5 ({\rm mod} \ 6)\Leftrightarrow wt(C)=\omega ~or~ wt(C)=\omega^5;$$
$$\nu(C)=-1 \Leftrightarrow |f-b|\equiv 2 ({\rm mod} \ 6) ~or~ |f-b|\equiv 4 ({\rm mod} \ 6) \Leftrightarrow wt(C)=\omega^2 ~or~ wt(C)=\omega^4.$$
\end{observation}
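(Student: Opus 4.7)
The plan is to observe that because $\omega=\frac{1}{2}+\frac{\sqrt{3}}{2}\mathsf{i}$ is a primitive sixth root of unity with $|\omega|=1$, the conjugate satisfies $\overline{\omega}=\omega^{-1}=\omega^{5}$. Substituting this into formula (2.1) collapses the cycle weight to $wt(C)=\omega^{f}\overline{\omega}^{b}=\omega^{f-b}$, so $wt(C)$ depends only on $(f-b) \bmod 6$. Combined with the paper's prior computation $\nu(C)=2\cos((f-b)\pi/3)$, and the evenness of cosine, both $wt(C)$ and $\nu(C)$ are determined by $|f-b|\bmod 6$. This also justifies direction-independence: reversing the orientation swaps $f$ and $b$ and conjugates $wt(C)$, leaving $\nu(C)$ and $|f-b|$ unchanged.

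With these reductions in hand, the proof reduces to a six-row table. For each $k\in\{0,1,2,3,4,5\}$ I would record the pair $(\omega^{k},\,2\cos(k\pi/3))$, obtaining $(1,2)$, $(\omega,1)$, $(\omega^{2},-1)$, $(-1,-2)$, $(\omega^{4},-1)$, $(\omega^{5},1)$. Grouping the rows by the value of $\nu$ partitions the six residues into exactly the four blocks listed in the observation, and reading off the corresponding weights $wt(C)$ recovers each of the stated biconditionals. A pleasant by-product is that $\nu(C)\neq 0$ always, since $2\cos(k\pi/3)$ is never zero for integer $k$---precisely the fact invoked in the sentence immediately preceding the observation.

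There is no substantive obstacle; the argument is a direct computation modulo $6$. The only care needed is the transition from $f-b$ to $|f-b|$, which is handled once by the parity remark on cosine. Thus the whole observation follows from two short algebraic reductions (namely $\overline{\omega}=\omega^{-1}$ and hence $wt(C)=\omega^{f-b}$) together with a single six-entry lookup of the primitive sixth roots of unity and their real parts.
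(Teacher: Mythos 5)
Your proposal is correct and follows essentially the same route as the paper, which proves the observation implicitly via the computation $\nu(C)=wt(C)+\overline{wt(C)}=2\cos((f-b)\pi/3)$ given just before the statement, together with the evenness of cosine for direction-independence. Your reduction $wt(C)=\omega^{f-b}$ and the six-entry lookup of $\bigl(\omega^{k},2\cos(k\pi/3)\bigr)$ is exactly the intended verification.
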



\begin{theorem} \label{antisun0}  Let $M$ be a mixed multigraph of order $n$. Then, $M$ is antispectral to its underlying graph if and only if every even mixed cycle of $M$ has weight $1$
and every odd mixed cycle of $M$ has weight $-1$.
\end{theorem}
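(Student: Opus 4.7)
The plan is to use Theorem~\ref{cha} to compare the coefficients $c_i$ of $\mathnormal{\Phi}(M,\lambda)=\sum_i c_i\lambda^{n-i}$ with the coefficients $c_i'$ of $\mathnormal{\Phi}(G,\lambda)=\sum_i c_i'\lambda^{n-i}$ (writing $G:=G(M)$), and then to exploit Observation~\ref{obser0}, which says $\nu(C)\in\{-2,-1,1,2\}$ with the extreme values $\pm 2$ attained iff $wt(C)=\pm 1$. First I would note that $M$ and $G$ are antispectral iff $\mathnormal{\Phi}(M,\lambda)=(-1)^n\mathnormal{\Phi}(G,-\lambda)$, i.e.\ $c_i=(-1)^i c_i'$ for all $i$. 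Since $G$ has no arcs, every mixed cycle $C$ of $G$ satisfies $\nu(C)=2$, and the Sachs subgraphs of $M$ are in natural bijection with those of $G$: a subgraph $S$ contributes $(-1)^{r(S)}\prod_{C\subset S}\nu(C)$ to $c_i$, while the same subgraph (with orientations forgotten) contributes $(-1)^{r(S)}\cdot 2^{t(S)}$ to $c_i'$, where $t(S)$ counts the cycle components of $S$.

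For the ``if'' direction, assume $\nu(C)=(-1)^{|C|}\cdot 2$ for every mixed cycle of $M$. For a Sachs subgraph $S$ of order $i$ with $m$ edge components and cycle components of lengths $\ell_1,\dots,\ell_t$, one has $i=2m+\sum_j\ell_j$, so
\[
\prod_{C\subset S}\nu(C)=2^t\,(-1)^{\sum_j\ell_j}=(-1)^{i}\cdot 2^{t}.
\]
Each Sachs subgraph of order $i$ therefore contributes $(-1)^i$ times as much to $c_i$ as to $c_i'$; summing yields $c_i=(-1)^i c_i'$ for every $i$, establishing antispectrality.

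For the ``only if'' direction, I would induct on $\ell\geq 2$, proving that every mixed cycle of length $\ell$ in $M$ satisfies $\nu(C)=(-1)^{\ell}\cdot 2$. Assume this holds for all lengths $<\ell$ (vacuous at $\ell=2$). Partition the Sachs subgraphs of $M$ of order $\ell$ into type~(a), a single mixed cycle of length $\ell$, and type~(b), the remaining ones, which decompose into edges and mixed cycles of length $<\ell$. By the inductive hypothesis and the calculation above, every type~(b) subgraph already contributes $(-1)^\ell$ times as much to $c_\ell$ as to $c_\ell'$. The required identity $c_\ell=(-1)^\ell c_\ell'$ therefore collapses to the type~(a) equation
\[
\sum_{C}\nu(C)=(-1)^\ell\cdot 2k,
\]
where $C$ ranges over the $k$ mixed cycles of length $\ell$ in $M$. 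Because $-2\leq\nu(C)\leq 2$ by Observation~\ref{obser0}, this sum reaches its extremum only when every summand does, forcing $\nu(C)=(-1)^\ell\cdot 2$, i.e.\ $wt(C)=(-1)^\ell$, for every mixed cycle of length $\ell$, which closes the induction.

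The main obstacle is the ``only if'' direction: extracting a per-cycle weight condition from a single aggregated coefficient identity. The induction on $\ell$ isolates the newly appearing length-$\ell$ cycles from the contamination of shorter ones, and the extremality principle $|\nu(C)|\leq 2$ with unique maximizers/minimizers given by Observation~\ref{obser0} does the remaining work.
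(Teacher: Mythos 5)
Your proof is correct and follows essentially the same route as the paper: compare the Sachs-type coefficient expansions of $M$ and $G(M)$, isolate the single-cycle Sachs subgraphs of each order after disposing of the shorter-cycle contributions, and use the extremality $|\nu(C)|\leq 2$ from Observation~\ref{obser0} to force each weight individually. Your induction on cycle length with the unified sign $(-1)^{\ell}$ is just a cleaner packaging of the paper's minimal-counterexample argument with its two parity cases, and your explicit computation for the ``if'' direction fills in what the paper leaves as ``easily verified.''
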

\begin{proof}
Write $G=G(M)$. Let $\mathnormal{\Phi}(G, \lambda)=\sum_{i=0}^nc_i\lambda^{n-i}$ and $\mathnormal{\Phi}(M, \lambda)=\sum_{i=0}^nc'_i\lambda^{n-i}$.
Then, $M$ and $G$ are antispectral if and only if $c_j=c'_j$ (for even $j$) and $c_j=-c'_j$ (for odd $j$).

The sufficiency is easily verified by Theorem \ref{cha} and Observation \ref{obser0}, and we only prove the necessity.
Assume to the contrary. There exists either some even cycles with weight no equal to $1$ or some odd cycles with weight not equal to $-1$.
Let $k$ be the minimum length among these cycles.

{\bf Case 1:} {\it $k$ is even.}  In order to yield a contradiction, we need to calculate $c_k$ and $c'_k$ using Theorem \ref{cha}.
Let $S_1$ and $S_1'$ be two corresponding Sachs subgraphs of order $k$ consisting of $k/2$ isolated edges of $G$ and $M$, respectively.
Clearly, $$(-1)^{r(S_1)}\prod_{C \subset S_1}\nu(C)=(-1)^{r(S'_1)}\prod_{C\subset S'_1} \nu(C)=(-1)^{k/2}.$$
Let $S_2$ and $S_2'$ be two corresponding Sachs subgraphs of order $k$ containing at least one cycle with length less than $k$ of $G$  and $M$, respectively.
Notice that all even (odd, resp.) mixed cycles of length less than $k$ in $M$ have weight $1$ ($-1$, resp.) and the fact that every Sachs subgraph $S_2$ (and $S_2'$) contains even number of odd mixed cycles.
It follows that
$$(-1)^{r(S_2)} \prod_{C\subset S_2}\nu(C)=(-1)^{r(S'_2)} \prod_{C\subset S'_2}\nu(C).$$
Let $S_3$ and $S_3'$ be two corresponding cycles of $G$ and $M$ with length $k$, respectively. By Theorem \ref{cha}, we deduce that
$$c_{k}-c'_{k}
=\sum_{S_3}(-1)\times 2 -\sum_{S_3'}(-1)\times \nu(S_3').$$
Then, the fact $c_{k}=c'_{k}$ forces that for any $S_3'$, its weight is $1$ by Observation \ref{obser0}, a contradiction.

{\bf Case 2:} {\it $k$ is odd.} The proof is similar to Case 1, which is left to the reader.
\end{proof}

\begin{remark} \label{cospecu}
Using the similar proof in Theorem \ref{antisun0}, we can also obtain that a mixed multigraph  and its underlying graph are cospectral if and only if all mixed cycles in the mixed multigraph  have weight $1$.
\end{remark}

The following two consequences immediately follow from Theorem \ref{cha} since the value of any cycle is invariant if we change the orientation of a bridge.

\begin{corollary}\label{cut}
If $uv$ is a bridge of a mixed multigraph, then the spectrum is invariant when we change the direction of $uv$.
\end{corollary}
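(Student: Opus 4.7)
The plan is to read the conclusion straight off Theorem~\ref{cha}. By that theorem, each coefficient $c_i$ of $\mathnormal{\Phi}(M,\lambda)$ is a sum over Sachs subgraphs $S$ of order $i$, weighted by $(-1)^{r(S)}\prod_{C \subset S}\nu(C)$. Reversing the direction of a single edge does not alter the underlying graph, so the family of Sachs subgraphs (viewed as edge subsets) and the sign $(-1)^{r(S)}$ are identical for $M$ and for the mixed multigraph $M'$ obtained by flipping the direction of $uv$.

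The key structural observation is that a bridge, by definition, belongs to no cycle of $G(M)$. In particular, for every Sachs subgraph $S$ and every cyclic component $C \subset S$, the edge $uv$ is not an edge of $C$. Therefore the ordered sequence of arcs traversed by $C$ in either cyclic direction is the same in $M$ and in $M'$, and consequently the forward/backward arc counts $(f,b)$ of $C$, hence the value $\nu(C) = 2\cos((f-b)\pi/3)$, are unchanged.

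Assembling these two points, every term in the expression of $c_i$ given by Theorem~\ref{cha} is preserved when we pass from $M$ to $M'$. Hence $\mathnormal{\Phi}(M,\lambda) = \mathnormal{\Phi}(M',\lambda)$, and the spectrum is invariant. There is no real obstacle here; the only thing to be careful about is that even though $uv$ itself may contribute as a single-edge component of a Sachs subgraph, such a component contributes only to $(-1)^{r(S)}$ (not to any $\nu(C)$), and that contribution is orientation-independent.
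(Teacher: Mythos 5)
Your proposal is correct and follows essentially the same route as the paper, which derives the corollary from Theorem~\ref{cha} by noting that a bridge lies on no cycle, so the value $\nu(C)$ of every cycle is unchanged when the bridge is reoriented. Your additional remark that a single-edge component of a Sachs subgraph contributes only through $(-1)^{r(S)}$ is a nice point of care, but it does not change the argument.
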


\begin{proposition}\label{for}
All mixed forests with the same underlying graph are cospectral.
\end{proposition}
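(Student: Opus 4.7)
The plan is to invoke Sachs' Coefficient Theorem (Theorem \ref{cha}) and exploit the fact that a forest contains no cycles at all, which collapses the coefficient formula to something depending only on the underlying graph.

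Fix a mixed forest $M$ of order $n$ with underlying forest $G=G(M)$, and write $\mathnormal{\Phi}(M,\lambda)=\sum_{i=0}^{n}c_i(M)\lambda^{n-i}$. First I would observe that since $G$ has no cycles, every Sachs subgraph $S$ of $M$ of order $i$ must be a disjoint union of (undirected or directed) single edges---equivalently, it is determined by a matching of $G$ of size $i/2$---and contains no mixed-cycle component. Consequently the product $\prod_{C\subset S}\nu(C)$ appearing in Theorem \ref{cha} is an empty product equal to $1$, and $r(S)=i/2$. Therefore $c_i(M)=0$ whenever $i$ is odd, and for even $i$,
$$c_i(M)=(-1)^{i/2}\,m_{i/2}(G),$$
where $m_k(G)$ denotes the number of $k$-matchings of $G$.

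The right-hand side depends only on the underlying forest $G$ and not on any orientation of its edges, so whenever $G(M_1)=G(M_2)$ we obtain $\mathnormal{\Phi}(M_1,\lambda)=\mathnormal{\Phi}(M_2,\lambda)$, and the two mixed forests are cospectral. There is essentially no obstacle here: the whole proof rests on the single observation that a forest has no cycles, which removes the only orientation-sensitive quantity $\nu(C)$ from the Sachs formula. As a bonus, the argument actually yields the stronger conclusion that every mixed forest is cospectral with its undirected underlying forest, which is consistent with the bridge-by-bridge viewpoint already established in Corollary \ref{cut}.
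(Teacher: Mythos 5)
Your proof is correct and follows essentially the same route as the paper, which derives Proposition \ref{for} directly from Theorem \ref{cha} using the absence of cycles in a forest. The explicit formula $c_i(M)=(-1)^{i/2}m_{i/2}(G)$ and the stronger conclusion that a mixed forest is cospectral with its underlying forest are valid refinements of the same one-line observation.
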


\section{Cospectral operations on mixed multigraphs}
From the perspective of spectral analysis, there are many cospectral mixed graphs with the same underlying graph.
Thus, researchers turn to consider the cospectral problem on classes of ``switching equivalent" mixed graphs rather than on individual mixed graphs~\cite{li1,mohar2,Wang,Wangy}.
Motivated by this, in this section, we will present operations that preserve the spectrum of mixed multigraphs.
For similar cospectral operations on simple mixed graphs, one can refer to Li and Yu \cite{li1}. Firstly, we describe our operations in matrix language below.
\begin{definition} \label{sixm}
Let $M'$ and $M''$ be two mixed multigraphs with the same underlying graph.
A {\it three-way switching} is the operation of changing $M'$ into $M''$, if there exists a diagonal matrix $D$ with
$D_{vv}\in \{\omega^i| 0\leq i\leq 5\}$ such that $$N(M'')=D^{-1}N(M')D,$$ or equivalently, $$N_{e''_{uv}}=D^{-1}_{uu}N_{e'_{uv}}D_{vv}$$ for any corresponding edges
$e'_{uv}$ and $e''_{uv}$.
\end{definition}

Definition \ref{sixm} implies that if $M''$ is obtained from $M'$ by a sequence of three-way switchings, then $M''$ can be obtained from $M'$ by a single three-way switching, and the operation of three-way switching
is invertible, that is, $M'$ can be also obtained from $M''$ by a three-way switching. The following proposition is also immediately obtained by Definition \ref{sixm}.

\begin{proposition} \label{sixcos}
The three-way switching on any mixed multigraph can give a cospectral mixed multigraph.
\end{proposition}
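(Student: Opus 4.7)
The plan is to deduce the proposition directly from standard linear-algebraic facts, with Definition~\ref{sixm} as the sole input. By hypothesis, if $M''$ arises from $M'$ by a three-way switching, then $N(M'') = D^{-1}N(M')D$ for a diagonal matrix $D$ whose entries lie in $\{\omega^i : 0\leq i\leq 5\}$. In particular each $D_{vv}$ is a nonzero complex number of modulus $1$, so $D$ is invertible (and indeed $D^{-1} = D^{*}$). Hence $N(M'')$ and $N(M')$ are related by conjugation through an invertible matrix; that is, they are similar.

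The remainder of the proof is then a single invocation of the fact that similar matrices share the same characteristic polynomial, and therefore the same eigenvalues counted with multiplicity. Since the spectrum of a mixed multigraph is by definition that of its Hermitian adjacency matrix, it follows that $M'$ and $M''$ have the same spectrum, i.e.\ they are cospectral.

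The only point worth flagging is not really an obstacle but a piece of bookkeeping: one might wish to verify that $D^{-1}N(M')D$ is still a Hermitian matrix whose off-diagonal entries are valid entries of a Hermitian adjacency matrix of a mixed multigraph. Hermiticity is automatic from $D^{-1}=D^{*}$ together with the Hermiticity of $N(M')$, but the second (combinatorial) check falls outside the scope of Proposition~\ref{sixcos}, because Definition~\ref{sixm} already presumes that $M''$ is a genuine mixed multigraph with $N(M'')=D^{-1}N(M')D$. Consequently the proof is a one-liner, and no deeper structural argument is needed.
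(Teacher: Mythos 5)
Your proof is correct and matches the paper's (implicit) reasoning exactly: the paper states that Proposition~\ref{sixcos} follows immediately from Definition~\ref{sixm}, precisely because $N(M'')=D^{-1}N(M')D$ exhibits the two adjacency matrices as similar, hence cospectral. Your additional remark that the combinatorial realizability of $D^{-1}N(M')D$ is already built into the definition is a fair observation and does not change the argument.
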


Given two mixed multigraphs with the same underlying graph, even they are cospectral, it is not easy to determine
whether one of them can be obtained from the other by a three-way switching or not.
Next, we provide an equivalent condition to understand the operation from a structural point of view.

Let $M'$ and $M''$ be two mixed multigraphs with the same underlying graph.
Taking any mixed cycle in $M'$, we first assign a direction for it and  we can obtain the corresponding mixed cycle in $M''$ naturally.
Hereafter, discussing two mixed multigraphs with the same underlying graph, we always assume that  all corresponding mixed cycles of them have the same cycle direction.
\begin{theorem} \label{judge}
Let $M'$ and $M''$ be two connected mixed multigraphs with the same underlying graph $G$ on $n$ vertices. Then one mixed multigraph
can be obtained from the other by a three-way switching if and only if all corresponding mixed cycles of them have the same weights.
\end{theorem}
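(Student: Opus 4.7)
The plan is to prove both directions. The necessity (three-way switching implies matching cycle weights) follows from a telescoping product. For any mixed cycle $C = v_1 e_1 v_2 \cdots v_s e_s v_1$ (with $v_{s+1} = v_1$), the per-edge switching relation $N_{e''_i} = D_{v_i v_i}^{-1} N_{e'_i} D_{v_{i+1} v_{i+1}}$ plugged into $wt_{M''}(C) = \prod_i N_{e''_i}$ causes every diagonal entry of $D$ to appear exactly once with exponent $+1$ (as the arrival at $v_i$ along $e_{i-1}$) and once with exponent $-1$ (as the departure along $e_i$), so the product collapses to $wt_{M'}(C)$.

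For sufficiency I plan to construct $D$ explicitly using a spanning tree. Since $G$ is connected, fix a spanning tree $T$ of $G$ rooted at some vertex $r$. Set $D_{rr} = 1$, and propagate along $T$: for a tree edge $e$ joining the tree-parent $u$ with its child $v$, define
$$D_{vv} = D_{uu} \cdot \frac{N_{e''_{uv}}}{N_{e'_{uv}}}.$$
Each single-edge entry lies in $\{1, \omega, \overline{\omega}\}$, so every such quotient is a power of $\omega$, yielding $D_{vv} \in \{\omega^i : 0 \le i \le 5\}$. By construction, the switching equation $N_{e''_{uv}} = D_{uu}^{-1} N_{e'_{uv}} D_{vv}$ holds on every tree edge.

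The core of the argument is checking the switching equation on every non-tree edge (including parallel multi-edges). For such an edge $f$ with endpoints $u, v$, the tree path $P_{uv}$ together with $f$ forms a mixed cycle $C_f$. Telescoping the definition of $D$ along $P_{uv}$ yields $D_{vv}/D_{uu} = wt_{M''}(P_{uv}) / wt_{M'}(P_{uv})$, while the hypothesis $wt_{M'}(C_f) = wt_{M''}(C_f)$ combined with the factorization $wt(C_f) = wt(P_{uv}) \cdot N_{f_{vu}}$ (for the orientation that first traverses $P_{uv}$, then $f$) gives
$$\frac{N_{f''_{vu}}}{N_{f'_{vu}}} = \frac{wt_{M'}(P_{uv})}{wt_{M''}(P_{uv})}.$$
Taking conjugates on both sides and using $N_{e_{vu}} = \overline{N_{e_{uv}}}$ together with $|D_{vv}| = 1$ converts this into exactly $N_{f''_{uv}} = D_{uu}^{-1} N_{f'_{uv}} D_{vv}$.

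The main obstacle I expect is dealing with parallel edges between the same pair of vertices: the single scalar $D_{uu}^{-1} D_{vv}$ must transform each such edge from its $M'$-value to a valid single-edge value of $M''$ in $\{1, \omega, \overline{\omega}\}$, ruling out outputs like $\omega^2$ or $\omega^4$. I expect this to be forced automatically by the length-two mixed cycles formed by pairs of parallel edges (which are legitimate mixed cycles in the paper's sense): their weight-matching conditions restrict how the parallel-edge orientations in $M'$ and $M''$ can differ and thereby guarantee that the same scalar $D_{uu}^{-1} D_{vv}$ produced by the tree construction works uniformly on all edges between a given pair of vertices.
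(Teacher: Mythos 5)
Your proof is correct. The necessity argument is the same telescoping computation as in the paper. For sufficiency, however, you take a genuinely different route from the paper's proof of Theorem~\ref{judge}: the paper inducts on the dimension of the cycle space, deleting one edge of a chosen cycle at each step, with Lemma~\ref{mtree} (mixed trees) as the base case; you instead build $D$ in a single pass along a rooted spanning tree and then verify the switching relation on each non-tree edge via its fundamental cycle. Your argument is essentially the one the paper itself deploys later for Lemma~\ref{multijudge}, and it yields the stronger conclusion that agreement of weights on a fundamental cycle basis already suffices (the content of Theorem~\ref{multilast0}), whereas the paper's induction consumes the full ``all cycles'' hypothesis. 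One remark: the worry in your final paragraph about parallel edges is a non-issue. You are verifying the identity $N''_{f_{uv}} = D_{uu}^{-1}N'_{f_{uv}}D_{vv}$ between two \emph{given} quantities, not constructing $M''$, so there is nothing to ``rule out''; each parallel non-tree edge carries its own fundamental cycle, and the weight condition on that cycle forces the identity for that edge exactly as in your third paragraph, with no separate appeal to length-two cycles needed.
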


To prove Theorem \ref{judge}, we need the following lemma.

\begin{lemma} \label{mtree}
Let $M'$  and $M''$ be two mixed trees with the same underlying graph. Then one of them  can be  obtained from the other by a three-way switching.
\end{lemma}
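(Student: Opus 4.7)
The plan is to construct the required diagonal matrix $D$ inductively along the tree, exploiting the fact that a tree contains no cycles and that the nonzero entries of $N(M')$ and $N(M'')$ are all sixth roots of unity (namely elements of $\{1,\omega,\bar\omega\}=\{\omega^0,\omega^1,\omega^5\}$, since for a tree there is at most one edge between any pair of adjacent vertices and no digons).

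First I would pick an arbitrary root $r$ of the underlying tree $T$ and set $D_{rr}=1$. Then, processing the remaining vertices in order of increasing distance from $r$, for each non-root vertex $v$ with (unique) parent $u$ in $T$, I define
\[
D_{vv}\;=\;D_{uu}\cdot\frac{N(M'')_{uv}}{N(M')_{uv}}.
\]
Here the quotient makes sense because both $N(M')_{uv}$ and $N(M'')_{uv}$ are nonzero sixth roots of unity, and the set $\{\omega^i:0\le i\le 5\}$ forms a multiplicative group; hence by induction every $D_{vv}$ lies in this group, so $D$ satisfies the constraint of Definition \ref{sixm}.

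Next I would verify the matrix identity $N(M'')=D^{-1}N(M')D$. Since $D$ is diagonal with unimodular entries, $D^{-1}_{uu}=\overline{D_{uu}}$, and the identity reduces to checking, for every edge $\{u,v\}$ of $T$, that
\[
N(M'')_{uv}\;=\;\overline{D_{uu}}\,N(M')_{uv}\,D_{vv}.
\]
This is immediate from the defining formula of $D_{vv}$ when $u$ is the parent of $v$; the reversed entry $N(M'')_{vu}=\overline{N(M'')_{uv}}$ is handled automatically because both $N(M'),N(M'')$ are Hermitian. For non-adjacent pairs the identity is $0=0$.

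The only conceptual obstruction that could spoil this inductive definition is a consistency issue: if $T$ contained a cycle, then following the defining relation around the cycle would force a nontrivial product-equals-one condition (exactly what Theorem \ref{judge} captures in general). Here, however, $T$ is a tree, so each vertex is reached from $r$ by a unique path and no such constraint arises; this is precisely why the tree case is easy and is the heart of why Lemma \ref{mtree} holds without any hypothesis on $M'$ and $M''$ beyond sharing an underlying graph. Finally, by Proposition \ref{sixcos} the construction exhibits $M''$ as a three-way switching of $M'$, which is the claim.
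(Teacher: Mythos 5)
Your proposal is correct and follows essentially the same route as the paper: root the tree, set the root's diagonal entry to $1$, and propagate $D_{vv}=D_{uu}N(M'')_{uv}/N(M')_{uv}$ outward along edges, using that all nonzero entries are sixth roots of unity so $D$ stays in the required group; the paper phrases this as building a function $f:V\to\{\omega^i\mid 0\le i\le 5\}$ by adjacency and then taking $D_{vv}=f(v)$. Your added remarks on Hermitian symmetry and the absence of cycle constraints are fine elaborations (the final citation should be to Definition \ref{sixm} rather than Proposition \ref{sixcos}, but this is cosmetic).
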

\begin{proof}
Let $N(M')=[N'_{uv}]$ and $N(M'')=[N''_{uv}]$.
Now we define a function $f: V(M')\rightarrow \mathfrak{W}=\{\omega^i| 0\leq i\leq 5\}$ by the following procedure. First, pick an arbitrary vertex $v$ of $M'$ and set $f(v) =1$, then we expand the definition of $f$ through adjacency relation.
For defined vertex  $v$ of $M'$ and its undefined neighbor $u,$ let $f^{-1}(u)N'_{uv}f(v) =N''_{uv}$, i.e., $f(u) =f(v)N'_{uv}{N''_{uv}}^{-1}$. 
The above process will terminate as $M'$ is a finite graph. Let $D=[D_{vv}]$ be the diagonal matrix with
$D_{vv}=f(v).$ It is easy to see that $N(M'')=D^{-1}N(M')D$.
The  result follows from the definition of three-way switching.
\end{proof}

\noindent\textbf{Proof of Theorem \ref{judge}.} Denote by $N(M')=[N'_{uv}]$ and $N(M'')=[N''_{uv}]$.

Necessity: By Definition \ref{sixm}, there exists a diagonal matrix $D=[D_{vv}]$ such that
$N''_{e}=D^{-1}_{uu}N'_{e}D_{vv}$ for each edge  $e=uv\in E(G)$. Denote a mixed cycle in $M'$ by $C' = v_1e'_{1}v_2e'_{2}v_3\cdots v_{s}e'_{s}v_1$ and its
corresponding mixed cycle in $M''$ by $C'' = v_1e''_{1}v_2e''_{2}v_3\cdots v_{s}e''_{s}v_1$. Then
\begin{align*}
wt(C'')=& N''_{e''_{1}}N''_{e''_{2}}\cdots N''_{e''_{s}}\\
=& D^{-1}_{v_1v_1}N'_{e'_{1}}D_{v_2v_2}D^{-1}_{v_2v_2}N'_{e'_{2}}D_{v_3v_3}\cdots D^{-1}_{v_sv_s}N'_{e'_{s}}D_{v_1v_1}\\
=& N'_{e'_{1}}N'_{e'_{2}}\cdots N'_{e'_{s}}=wt(C').
\end{align*}

Now we prove the sufficiency by induction on the dimension of cycle space of $M'$, say $d$.
If $d=0$, then $M'$  and $M''$  are two mixed trees with the same underlying graph, and
the assertion follows from Lemma \ref{mtree}.
We may suppose that $d\geq 1$ and the assertion holds for all connected mixed multigraphs  with dimension less than $d$.
Let $M'$ be a connected mixed multigraph  with dimension $d$.
Take a mixed cycle $C' = v_1e'_{1}v_2e'_{2}v_3\cdots v_{s}e'_{s}v_1$ in $M'$ and a mixed edge $e'_{1}\in E(C')$ and
denote the corresponding mixed cycle and mixed edge in $M''$ by $C''$ and $e''_{1}$, respectively.
By induction hypothesis, there exists a diagonal matrix $D=[D_{vv}]$ such that
$$N''_{e''_{uv}}=D^{-1}_{uu}N'_{e'_{uv}}D_{vv}$$
for any corresponding edges~$e'_{uv}$ and $e''_{uv}$ of $M'-e'_{1}$ and $M''-e''_{1}$, respectively.
To complete the proof, it suffices to prove
$N''_{e''_{1}}=D^{-1}_{v_1v_1}N'_{e'_{1}}D_{v_2v_2}$. It follows from $wt(C')=wt(C'')$ that
\begin{align*}
N'_{e'_{1}}N'_{e'_{2}}\cdots N'_{e'_{s}}=& N''_{e''_{1}}N''_{e''_{2}}\cdots N''_{e''_{s}}\\
=& N''_{e''_{1}}D^{-1}_{v_2v_2}N'_{e'_{2}}D_{v_3v_3}\cdots D^{-1}_{v_sv_s}N'_{e'_{s}}D_{v_1v_1}.
\end{align*}
The result follows. $\blacksquare$

A mixed cycle $C$ is called {\it positive} if $wt(C)=1.$ 
A mixed multigraph is said to be {\it positive} if each its mixed cycle is positive.
Theorem \ref{judge} easily implies the following corollary.
\begin{corollary} \label{cosunder}
Let $M$ be a mixed multigraph. Then $M$  can be obtained from its underlying graph by a three-way switching if and only if $M$ is positive.
\end{corollary}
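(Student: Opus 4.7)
The plan is to derive this as an almost immediate corollary of Theorem \ref{judge} by taking the underlying graph $G=G(M)$ itself as one of the two mixed multigraphs being compared. Specifically, I would view $G$ as a mixed multigraph $M_0$ in which every edge is undirected (i.e.\ $A=\emptyset$). Then $M$ and $M_0$ have the same underlying graph, so Theorem \ref{judge} applies: $M$ is obtained from $M_0=G(M)$ by a three-way switching if and only if every mixed cycle of $M$ has the same weight as the corresponding cycle in $M_0$.

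The key observation is then a one-line computation: for any mixed cycle $C_0$ of $M_0$, each edge $e_i$ of $C_0$ is undirected, so by formula $(2.1)$ we have $N_{e_i}=1$ for every $i$, whence $wt(C_0)=1$. Consequently the condition ``all corresponding mixed cycles of $M$ and $M_0$ have the same weight'' collapses to ``every mixed cycle of $M$ has weight $1$'', which is exactly the definition of $M$ being positive.

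To assemble the proof, I would first remark that Theorem \ref{judge} was stated for connected mixed multigraphs; if $M$ is disconnected, one applies the argument componentwise, since a three-way switching is defined by a diagonal matrix $D$ and can be chosen independently on each component. Then for the sufficiency direction, assuming $M$ is positive, I invoke Theorem \ref{judge} on each component of $M$ and $M_0$ to produce a three-way switching taking $M_0=G(M)$ to $M$. For the necessity direction, if $M$ is obtained from $G(M)$ by a three-way switching, the necessity half of Theorem \ref{judge} (whose proof in the excerpt is a direct matrix computation $D^{-1}_{v_1v_1}N'_{e'_1}D_{v_2v_2}\cdots D^{-1}_{v_sv_s}N'_{e'_s}D_{v_1v_1}$) shows corresponding cycle weights agree, so $wt(C)=wt(C_0)=1$ for each mixed cycle $C$ of $M$, i.e.\ $M$ is positive.

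There is no real obstacle here; the only point requiring a moment of care is making sure the correspondence of cycles (with a fixed cycle direction, as stipulated in the paragraph preceding Theorem \ref{judge}) is set up cleanly between $M$ and its all-undirected counterpart $M_0$, and that the three-way switching is recognized as what Lemma \ref{mtree} already constructs on a spanning tree, with the cycle-weight equalities propagating to the non-tree edges exactly as in the inductive step of Theorem \ref{judge}'s proof.
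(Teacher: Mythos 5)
Your proposal is correct and is exactly the argument the paper intends: the corollary is stated as an easy consequence of Theorem \ref{judge}, obtained by viewing the underlying graph as an all-undirected mixed multigraph whose cycles all have weight $1$, so that the cycle-weight condition of Theorem \ref{judge} reduces to positivity of $M$. Your added remark about handling disconnected $M$ componentwise is a reasonable point of care that the paper glosses over.
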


Now we describe the operation of a three-way switching in the graphic structural language. The similar interpretation was given by Li and Yu \cite{li1} for simple mixed graphs.
Assume that there is a partition of the vertex set of a mixed multigraph $M$ into six (possibly empty) sets, $V(M) = V_1 \cup V_\omega \cup V_{\omega^2}\cup V_{\omega^3}\cup V_{\omega^4}\cup V_{\omega^5}$, and it is called
{\it admissible} if the following conditions hold:

(1) For each $j\in \{\omega^i| 0\leq i\leq 5\},$ there are no arcs of $M$ directed from $V_{j}$ to $V_{\omega j}.$

(2) For each $j\in \{\omega^i| 0\leq i\leq 5\},$ there are neither undirected nor directed edges of $M$ with one endpoint in $V_{j}$ and the other endpoint in $V_{\omega^3 j}.$

(3) For each $j\in \{\omega^i| 0\leq i\leq 5\},$ all edges of $M$ with one endpoint in $V_{j}$ and the other endpoint in $V_{\omega^4 j}$ are arcs directed from $V_{j}$ to $V_{\omega^4 j}.$ 

A {\it three-way switching} with respect to an  admissible partition  $V(M) = V_1 \cup V_\omega \cup V_{\omega^2}\cup V_{\omega^3}\cup V_{\omega^4}\cup V_{\omega^5}$ is the operation of changing $M$ into the mixed multigraph $M'$ by making the following transformations, see Fig. \ref{f1}.
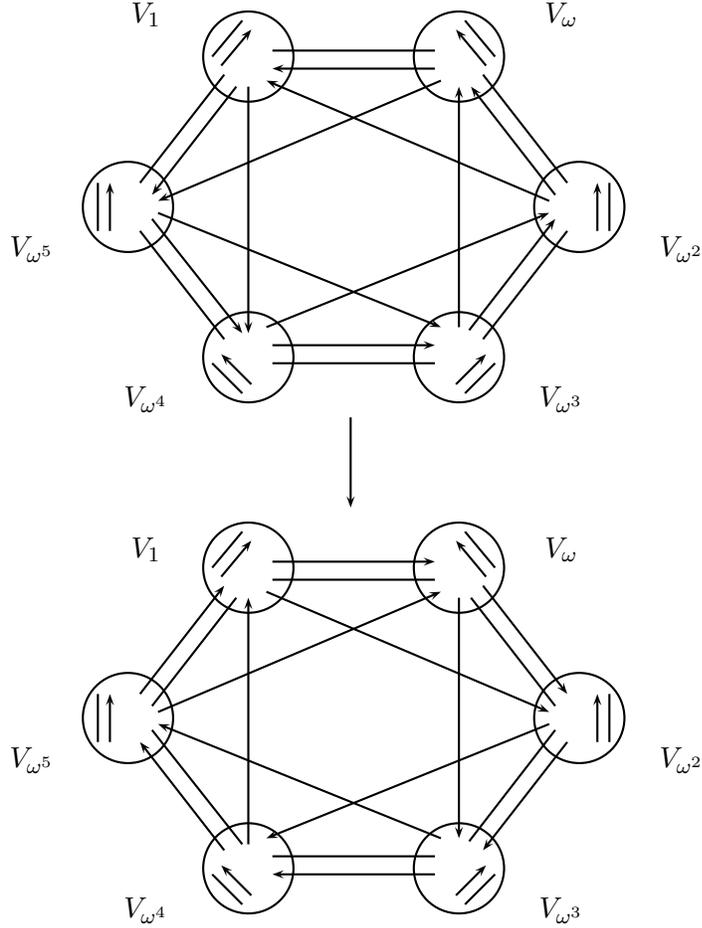
\begin{figure}[ht]
\psset{unit=.8}
\begin{center}
\begin{pspicture}(-11,-8)(0,8)
\pnode(-6.3,1){a}  \pnode(-6.3,-.5){b}  \ncline{->}{a}{b}

\rput(-.8,3.8){$V_{\omega^2}$} \rput(-11.6,3.8){$V_{\omega^5}$}

\cnode(-8,2){.6cm}{1} \rput(-9.7,1.3){$V_{\omega^4}$}
\cnode(-8,7){.6cm}{2}\rput(-9.7,7.7){$V_{1}$}
\cnode(-4.5,2){.6cm}{3}\rput(-2.8,1.3){$V_{\omega^3}$}
\cnode(-4.5,7){.6cm}{4}\rput(-2.8,7.7){$V_{\omega}$}
\cnode(-10,4.5){.6cm}{A}
\cnode(-2.5,4.5){.6cm}{B}

\pnode(-10.3,4.1){A1}  \pnode(-10.3,4.9){A2}  \ncline{<-}{A2}{A1}
\pnode(-10.5,4.1){A3}  \pnode(-10.5,4.9){A4}  \ncline{A4}{A3}
\pnode(-9.6,4.7){A5}  \pnode(-8.2,6.5){C1}  \ncline{<-}{A5}{C1}
\pnode(-9.8,4.9){A6}  \pnode(-8.4,6.7){C2}  \ncline{A6}{C2}
\pnode(-9.8,4.1){A8}  \pnode(-8.4,2.3){C4}  \ncline{A8}{C4}
\pnode(-9.6,4.3){A7}  \pnode(-8.1,2.4){C3}  \ncline{->}{A7}{C3}

\pnode(-2.2,4.1){B1}  \pnode(-2.2,4.9){B2}  \ncline{<-}{B2}{B1}
\pnode(-2,4.1){B3}  \pnode(-2,4.9){B4}  \ncline{B4}{B3}
\pnode(-4.33,2.45){D1}  \pnode(-2.9,4.3){B3}  \ncline{->}{D1}{B3}
\pnode(-4.1,2.3){D2}  \pnode(-2.7,4.1){B4}  \ncline{D2}{B4}
\pnode(-4.3,6.5){B5}  \pnode(-2.9,4.7){B7}  \ncline{->}{B7}{B5}
\pnode(-4.1,6.7){B6}  \pnode(-2.7,4.9){B8}  \ncline{B6}{B8}

\pnode(-8.6,1.9){11}  \pnode(-8.1,1.4){12}  \ncline{11}{12}
\pnode(-8.45,2.05){13}  \pnode(-7.95,1.55){14}  \ncline{<-}{13}{14}
\pnode(-8,2.4){19} \pnode(-8,6.5){29}  \ncline{<-}{19}{29}

\pnode(-7.6,1.9){15}  \pnode(-4.9,1.9){35} \ncline{15}{35}
\pnode(-7.6,2.2){16} \pnode(-4.9,2.2){36}  \ncline{<-}{36}{16}
\pnode(-7.6,7.1){25}  \pnode(-4.9,7.1){45} \ncline{25}{45}
\pnode(-7.6,6.8){26} \pnode(-4.9,6.8){46} \ncline{->}{46}{26}

\pnode(-8.6,7){21}  \pnode(-8.1,7.6){22}  \ncline{21}{22}
\pnode(-8.45,6.85){23}  \pnode(-7.95,7.45){24}  \ncline{->}{23}{24}

\pnode(-3.9,7){41}  \pnode(-4.4,7.6){42}  \ncline{41}{42}
\pnode(-4.55,7.45){43}  \pnode(-4.05,6.85){44}  \ncline{->}{44}{43}

\pnode(-3.9,1.9){31}  \pnode(-4.4,1.4){32}  \ncline{31}{32}
\pnode(-4.05,2.05){33}  \pnode(-4.55,1.55){34}  \ncline{<-}{33}{34}

\pnode(-4.5,2.5){X}  \pnode(-4.5,6.5){Y}  \ncline{->}{X}{Y}

\pnode(-9.5,4.6){F1}  \pnode(-4.8,6.6){F2}  \ncline{->}{F2}{F1}
\pnode(-9.5,4.4){F3}  \pnode(-3,4.6){F4}  \pnode(-3,4.4){F5}
\pnode(-7.7,6.6){F6}
\ncline{->}{F4}{F6}
\pnode(-7.7,2.5){F7}\pnode(-4.8,2.5){F8}
\ncline{->}{F7}{F5} \ncline{->}{F3}{F8}

\rput(-.8,-4.7){$V_{\omega^2}$} \rput(-11.6,-4.7){$V_{\omega^5}$}

\cnode(-8,-6.5){.6cm}{110} \rput(-9.7,-7.2){$V_{\omega^4}$}
\cnode(-8,-1.5){.6cm}{210}\rput(-9.7,-1.2){$V_{1}$}
\cnode(-4.5,-6.5){.6cm}{310}\rput(-2.8,-7.2){$V_{\omega^3}$}
\cnode(-4.5,-1.5){.6cm}{410}\rput(-2.8,-1.2){$V_{\omega}$}
\cnode(-10,-4){.6cm}{A}
\cnode(-2.5,-4){.6cm}{B}

\pnode(-10.3,-4.4){A11}  \pnode(-10.3,-3.6){A21}  \ncline{<-}{A21}{A11}
\pnode(-10.5,-4.4){A31}  \pnode(-10.5,-3.6){A41}  \ncline{A41}{A31}
\pnode(-9.6,-3.8){A51}  \pnode(-8.2,-2){C11}  \ncline{A51}{C11}
\pnode(-9.8,-3.6){A61}  \pnode(-8.4,-1.8){C21}  \ncline{->}{A61}{C21}
\pnode(-9.8,-4.4){A81}  \pnode(-8.4,-6.2){C41}  \ncline{<-}{A81}{C41}
\pnode(-9.6,-4.2){A71}  \pnode(-8.1,-6.1){C31}  \ncline{A71}{C31}

\pnode(-2.2,-4.4){B11}  \pnode(-2.2,-3.6){B21}  \ncline{<-}{B21}{B11}
\pnode(-2,-4.4){B31}  \pnode(-2,-3.6){B41}  \ncline{B41}{B31}
\pnode(-4.33,-6.05){D11}  \pnode(-2.9,-4.2){B31}  \ncline{D11}{B31}
\pnode(-4.1,-6.2){D21}  \pnode(-2.7,-4.4){B41}  \ncline{<-}{D21}{B41}
\pnode(-4.3,-2){B51}  \pnode(-2.9,-3.8){B71}  \ncline{B71}{B51}
\pnode(-4.1,-1.8){B61}  \pnode(-2.7,-3.6){B81}  \ncline{->}{B61}{B81}

\pnode(-8.6,-6.6){111}  \pnode(-8.1,-7.1){121}  \ncline{111}{121}
\pnode(-8.45,-6.45){131}  \pnode(-7.95,-6.95){141}  \ncline{<-}{131}{141}
\pnode(-8,-6.1){191} \pnode(-8,-2){291}  \ncline{<-}{291}{191}

\pnode(-7.6,-6.6){151}  \pnode(-4.9,-6.6){351} \ncline{<-}{151}{351}
\pnode(-7.6,-6.3){161} \pnode(-4.9,-6.3){361}  \ncline{361}{161}
\pnode(-7.6,-1.4){251}  \pnode(-4.9,-1.4){451} \ncline{->}{251}{451}
\pnode(-7.6,-1.7){261} \pnode(-4.9,-1.7){461} \ncline{461}{261}

\pnode(-8.6,-1.5){211}  \pnode(-8.1,-.9){221}  \ncline{211}{221}
\pnode(-8.45,-1.65){231}  \pnode(-7.95,-1.05){241}  \ncline{->}{231}{241}

\pnode(-3.9,-1.5){411}  \pnode(-4.4,-.9){421}  \ncline{411}{421}
\pnode(-4.55,-1.05){431}  \pnode(-4.05,-1.65){441}  \ncline{->}{441}{431}

\pnode(-3.9,-6.6){311}  \pnode(-4.4,-7.1){321}  \ncline{311}{321}
\pnode(-4.05,-6.45){331}  \pnode(-4.55,-6.95){341}  \ncline{<-}{331}{341}

\pnode(-4.5,-6){X1}  \pnode(-4.5,-2){Y1}  \ncline{->}{Y1}{X1}

\pnode(-9.5,-3.9){F11}  \pnode(-4.8,-1.9){F21}  \ncline{->}{F11}{F21}
\pnode(-9.5,-4.1){F31}  \pnode(-3,-3.9){F41}  \pnode(-3,-4.1){F51}
\pnode(-7.7,-1.9){F61}
\ncline{->}{F61}{F41}
\pnode(-7.7,-6){F71}\pnode(-4.8,-6){F81}
\ncline{->}{F51}{F71} \ncline{->}{F81}{F31}
\end{pspicture}
\caption{\label{f1}\footnotesize Three-way switching with respect to an admissible partition.}
\end{center}
\end{figure}

(1) For each $j\in \{\omega^i| 0\leq i\leq 5\},$ replacing each undirected edge between $V_{j}$ and $V_{\omega j}$ with a single arc
directed  from $V_{j}$ to $V_{\omega j}.$

(2) For each $j\in \{\omega^i| 0\leq i\leq 5\},$ replacing each arc directed  from $V_{j}$ to $V_{\omega^5 j}$ with a single undirected edge.

(3) For each $j\in \{\omega^i| 0\leq i\leq 5\},$ reversing the direction of all arcs directed  from $V_{j}$ to $V_{\omega^4 j}$.

\begin{theorem} \label{COS}
If a partition $V(M) = V_1 \cup V_\omega \cup V_{\omega^2}\cup V_{\omega^3}\cup V_{\omega^4}\cup V_{\omega^5}$  is admissible, then the mixed multigraph $M'$ obtained from $M$ by a three-way switching is cospectral with $M.$
\end{theorem}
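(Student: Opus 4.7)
The plan is to bring the structural three-way switching back into the matrix form of Definition \ref{sixm} and then invoke Proposition \ref{sixcos}. Define the diagonal matrix $D \in \mathbb{C}^{V \times V}$ by $D_{vv} = j$ whenever $v \in V_j$. Each diagonal entry has modulus $1$, so $D^{-1}_{vv} = \overline{D_{vv}}$. The crux will be to prove $N(M') = D^{-1} N(M) D$ entry by entry; once this holds, Definition \ref{sixm} identifies $M'$ as a three-way switching of $M$ in the matrix sense, and Proposition \ref{sixcos} immediately yields cospectrality.

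Next I would reduce the verification to a local computation on each pair of adjacent vertices. For $u \in V_{\omega^a}$ and $v \in V_{\omega^b}$ one has $D^{-1}_{uu} N(M)_{uv} D_{vv} = \omega^{b-a} N(M)_{uv}$, so the task becomes showing $N(M')_{uv} = \omega^m N(M)_{uv}$ where $m = (b-a)\bmod 6$. Before computing, I would first unfold the admissibility conditions over every shift $j$ to pin down exactly which edge patterns are allowed between $V_a$ and $V_{a+m}$ for each residue: for $m=1$ only undirected edges and arcs $V_{a+1}\to V_a$ are present; for $m=2$ only arcs $V_{a+2}\to V_a$; for $m=3$ no edges at all; for $m=4$ only arcs $V_a\to V_{a+4}$; and for $m=5$ only undirected edges and arcs $V_a\to V_{a+5}$. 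The one subtle point is that condition (3), applied to two different choices of $j$, produces constraints on \emph{both} the residue-$4$ pair and the residue-$2$ pair; overlooking the second constraint would break the residue-$2$ case of the computation.

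With the admissible edge patterns in hand, the six cases are direct. For each residue $m$ I would compute $N(M)_{uv}$ from the allowed types (using $e\{\,,\,\}$, $e(\,,\,)$, and the values $1, \omega, \overline{\omega}$), identify which of the structural operations acts on the pair $\{V_a, V_{a+m}\}$ (operation (1) for $m=1$, operation (2) for $m=5$, operation (3) for both $m=2$ and $m=4$, and no operation for $m=0,3$), write down $N(M')_{uv}$, and verify $N(M')_{uv} = \omega^m N(M)_{uv}$ using only $\omega\overline{\omega}=1$ and $\omega^6=1$. The diagonal entries are zero on both sides since the graphs have no loops. I expect the main obstacle to be purely bookkeeping — tracking which operation acts on which pair and combining the multiple admissibility constraints correctly — rather than any conceptual difficulty, so once all six residues check out the required identity holds and the theorem follows.
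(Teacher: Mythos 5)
Your proposal is exactly the paper's proof: the paper defines the same diagonal matrix $D$ with $D_{vv}=j$ for $v\in V_j$, asserts that $N(M')=D^{-1}N(M)D$ is ``easy to check,'' and concludes by similarity; your residue-by-residue verification simply fills in that check, and your observation that condition (3) constrains both the residue-$2$ and residue-$4$ pairs is the right subtlety to catch. One small bookkeeping correction for when you carry out the cases: each residue-$\pm1$ pair is acted on by \emph{both} operation (1) (converting its undirected edges to arcs) and operation (2) (converting its arcs to undirected edges), so assigning operation (1) only to $m=1$ and operation (2) only to $m=5$ would leave the arcs $v\to u$ untouched in the $m=1$ case and the identity $N(M')_{uv}=\omega\, N(M)_{uv}$ would fail there.
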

\begin{proof}
Let  $D$ be the diagonal matrix such that
$D_{vv}=j\in  \{\omega^i| 0\leq i\leq 5\}$ if $v\in V_j.$ It is easy to check that $N(M')=D^{-1}N(M)D$.
The result follows.
\end{proof}

\begin{definition} \label{converse}
Let $M$ be a mixed multigraph. The {\it converse of $M$},  denoted by $M^T$, is the mixed multigraph obtained by reversing the direction of all arcs of $M$.
\end{definition}

By Definition \ref{converse}, we see that a mixed multigraph $M$ and its converse $M^T$ are cospectral  since $N(M^T)=N(M)^T=\overline{N(M)}.$
This implies the following result.

\begin{proposition} \label{cconverse}
Let $M$ be a mixed multigraph. If $C'$ and $C''$ are two corresponding mixed cycles in $M$ and $M^T$, respectively, then $wt(C')=\overline{wt(C'')}$.
\end{proposition}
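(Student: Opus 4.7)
The plan is to unwind the weight formula (2.1) edge by edge and exploit the fact that $M^T$ conjugates each individual edge weight. Per the convention preceding Theorem~\ref{judge}, $C''$ is traversed in the same cycle direction as $C'$; write $C' = v_1 e'_1 v_2 \cdots v_s e'_s v_1$ and $C'' = v_1 e''_1 v_2 \cdots v_s e''_s v_1$.

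The first step I would carry out is the local identity $N''_{e''_i} = \overline{N'_{e'_i}}$ for each $i$, via a three-case check. An undirected edge of $C'$ remains undirected in $M^T$, giving contribution $1$ in both; an arc of $C'$ that is forward along the chosen direction becomes backward in $M^T$, swapping $\omega$ for $\overline{\omega}$; and a backward arc becomes forward, swapping $\overline{\omega}$ for $\omega$. In all three cases the new contribution is the complex conjugate of the old.

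Given the local identity, the statement follows directly from multiplicativity of complex conjugation:
$$wt(C'') \;=\; \prod_{i=1}^{s} N''_{e''_i} \;=\; \prod_{i=1}^{s} \overline{N'_{e'_i}} \;=\; \overline{\prod_{i=1}^{s} N'_{e'_i}} \;=\; \overline{wt(C')},$$
and conjugating both sides gives $wt(C') = \overline{wt(C'')}$.

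There is no substantive obstacle; the only point requiring care is to keep the conventions straight, namely that the same cycle direction is used for $C'$ and $C''$ (so that ``forward'' in one corresponds to ``forward'' in the other before arc-reversal), and to read (2.1) as a product of \emph{individual} edge weights rather than aggregated matrix entries, since in the multigraph setting a given pair of vertices may support several parallel edges whose contributions are summed in $N(M)$ but multiplied separately along the walk.
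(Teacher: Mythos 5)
Your proof is correct and follows essentially the same route as the paper, which simply observes that $N(M^T)=N(M)^T=\overline{N(M)}$ and lets the cycle-weight identity follow. Your edge-by-edge case check is just a more carefully spelled-out version of that conjugation argument, and your closing remark about reading (2.1) as a product of individual edge weights (rather than aggregated matrix entries) is a worthwhile precision in the multigraph setting.
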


Combining Theorem \ref{judge} with Proposition \ref{cconverse}, we observe that the operation of converse does not belong to three-way
switching. Based on this,  we present the following definition.

\begin{definition} \label{seq}
Two mixed multigraphs $M'$ and $M''$  are said to be {\it switching equivalent} if one can be
obtained from the other by a three-way switching and/or taking a converse.
\end{definition}

Denote by $\mathscr{M}(G)$  the set of all mixed multigraphs whose underlying graph is the undirected  multigraph $G$.
Next, we will show that switching equivalence forms an equivalence relation on $\mathscr{M}(G)$.
We use $\mathfrak{W}=\{\omega^i| 0\leq i\leq 5\}$ to denote the multiplicative group generated by  $\omega$ and use
$\mathfrak{D}$ to denote the set of all diagonal matrices whose diagonal elements belong to $\mathfrak{W}$. For a mixed multigraph $M$, each $D\in \mathfrak{D}$
corresponds to a partition $V(M) = V_1 \cup V_\omega \cup V_{\omega^2}\cup V_{\omega^3}\cup V_{\omega^4}\cup V_{\omega^5}$, where $V_j=\{v\in V(M)|D_{vv}=j\}, j\in \mathfrak{W}$.
\begin{theorem} \label{er}
Let $G$ be an undirected  multigraph. Then  switching equivalence forms an equivalence relation on the set $\mathscr{M}(G).$
\end{theorem}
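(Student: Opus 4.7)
The plan is to verify the three axioms of an equivalence relation on $\mathscr{M}(G)$ by working at the level of the matrix conjugation interpretation of three-way switching given in Definition~\ref{sixm}. The key algebraic facts to exploit are that $\mathfrak{W}$ is a multiplicative group, hence $\mathfrak{D}$ is closed under multiplication and inversion, and that $\mathfrak{W}$ is closed under complex conjugation, so $\overline{D}\in\mathfrak{D}$ whenever $D\in\mathfrak{D}$.

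First, I would establish reflexivity by taking $D$ to be the identity matrix in $\mathfrak{D}$, so that $N(M)=D^{-1}N(M)D$ realizes $M$ as a three-way switching of itself. Next, for symmetry, I would observe that if $N(M'')=D^{-1}N(M')D$ with $D\in\mathfrak{D}$, then $N(M')=DN(M'')D^{-1}=(D^{-1})^{-1}N(M'')(D^{-1})$ with $D^{-1}\in\mathfrak{D}$, and that $(M^T)^T=M$. For a general sequence of three-way switchings and converses, one reverses the sequence and replaces each three-way switching $D$ by $D^{-1}$ and each converse by a converse, which again is a valid sequence of the allowed operations.

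The main work lies in transitivity. The plan is to show that any finite sequence of three-way switchings and converses can be reduced to one of two normal forms: a single three-way switching, or a converse followed by a single three-way switching. To do this I will establish two composition rules. The first is that if $N(M'')=D_1^{-1}N(M')D_1$ and $N(M''')=D_2^{-1}N(M'')D_2$, then $N(M''')=(D_1D_2)^{-1}N(M')(D_1D_2)$; since $\mathfrak{D}$ is closed under products, the composition of two three-way switchings is a three-way switching. The second rule is a commutation relation between converse and three-way switching: if $N(M'')=D^{-1}N(M')D$, then
\begin{equation*}
N(M''^T)=\overline{N(M'')}=\overline{D}^{\,-1}\,\overline{N(M')}\,\overline{D}=\overline{D}^{\,-1}N(M'^T)\overline{D},
\end{equation*}
and $\overline{D}\in\mathfrak{D}$ because $\mathfrak{W}$ is conjugation-closed. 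Thus a converse followed by a three-way switching can be rewritten as a three-way switching followed by a converse, and vice versa, allowing me to push all converses to one end of any sequence. Since two consecutive converses cancel, the sequence collapses to at most one converse together with a single three-way switching.

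With the normal form in hand, transitivity is immediate: given $M'\sim M''$ and $M''\sim M'''$, concatenate the two sequences realizing these equivalences and reduce to normal form, obtaining a direct passage from $M'$ to $M'''$ by a three-way switching and/or a converse. The most delicate point, and the one I would write out carefully, is the commutation identity for $\overline{D}$; everything else is bookkeeping that follows from the group structure of $\mathfrak{W}$.
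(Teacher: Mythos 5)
Your proposal is correct and follows essentially the same route as the paper: both arguments rest on the group structure of $\mathfrak{D}$ (giving reflexivity, symmetry, and closure under composition of switchings) together with the commutation identity $\overline{D^{-1}N(M')D}=\overline{D}^{\,-1}\,\overline{N(M')}\,\overline{D}$, which lets converses be pushed past three-way switchings so that any sequence of operations collapses to at most one converse and one switching. The paper states this more tersely, but the underlying normal-form argument is the same as yours.
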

\begin{proof}
Note that switching equivalent mixed multigraphs have the same underlying graph.
Since $\mathfrak{D}$ is a group for matrix multiplication, the similarity relation with matrices in $\mathfrak{D}$ is symmetric and transitive.
It is easy to see that $\mathfrak{D}$ is closed for complex conjugation and that $N^T=\overline{N}$. If
$M''$ is obtained from $M'$ by a similarity transformation following by taking the converse, then
$$N(M'')=\overline{D^{-1}N(M')D}=\overline{D^{-1}}~\overline{N(M')}~\overline{D}=D~\overline{N(M')}~D^{-1}.$$
It follows that $M''$ is obtained by  a three-way switching from the converse of $M'$. This now easily implies the result.
\end{proof}

\begin{remark} \label{equivalenceclass}
The above proof implies that the switching equivalence class of a mixed multigraph contains all mixed multigraphs obtained from itself or from its converse by a single application of a three-way switching.
\end{remark}

Combining Theorem  \ref{judge},  Proposition \ref{cconverse} with Theorem  \ref{er}, we now provide an equivalent
characterization of switching equivalence between two mixed multigraphs.
\begin{theorem} \label{mai}
Two connected mixed multigraphs with the same underlying graph are switching equivalent if and only if the weights of  all corresponding mixed cycles of them are equal or  conjugate to each other.
\end{theorem}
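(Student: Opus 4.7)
The plan is to derive Theorem \ref{mai} as a direct assembly of three earlier ingredients: Theorem \ref{judge} (three-way switching is characterized by equality of all corresponding cycle weights), Proposition \ref{cconverse} (taking the converse conjugates every cycle weight), and Theorem \ref{er} together with Remark \ref{equivalenceclass} (the switching equivalence class of $M'$ is reached from $M'$ or from $M'^{T}$ by a single three-way switching). The hypothesis ``all corresponding weights are equal or conjugate to each other'' I read in the global sense: either $wt(C'')=wt(C')$ for every pair of corresponding cycles, or $wt(C'')=\overline{wt(C')}$ for every pair.

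For necessity, I would use Remark \ref{equivalenceclass} to reduce to two cases. If $M''$ is obtained from $M'$ by a three-way switching, Theorem \ref{judge} directly yields $wt(C'')=wt(C')$ for every corresponding pair. If $M''$ is obtained from $M'^{T}$ by a three-way switching, I first apply Proposition \ref{cconverse} between $M'$ and $M'^{T}$ to get that corresponding cycles carry conjugate weights, and then apply Theorem \ref{judge} between $M'^{T}$ and $M''$ to get equal weights there; composing the two equalities yields $wt(C'')=\overline{wt(C')}$ globally.

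For sufficiency I split on the same dichotomy. If all corresponding pairs have equal weights, Theorem \ref{judge} produces a three-way switching from $M'$ to $M''$ and we are done. If all corresponding pairs have conjugate weights, I compare $M''$ with $M'^{T}$ rather than with $M'$: by Proposition \ref{cconverse} each cycle of $M'^{T}$ carries the conjugate of the weight of the corresponding cycle of $M'$, hence, by the hypothesis, it carries the \emph{same} weight as the corresponding cycle of $M''$. Theorem \ref{judge} then provides a three-way switching from $M'^{T}$ to $M''$. Since $M'$ and $M'^{T}$ are switching equivalent by Definition \ref{seq}, transitivity via Theorem \ref{er} closes the argument.

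The step I expect to require the most care is not any single deduction but rather the interpretation of the hypothesis. Because cycle weights lie in the sixth roots of unity, cycles of weight $\pm 1$ satisfy equality and conjugation simultaneously, so a naive cycle-by-cycle reading of ``equal or conjugate'' would allow inconsistent branches across different cycles and would not correspond to anything switching equivalence can realize. I will therefore fix the global reading at the outset of the proof and then the two-case argument sketched above is just book-keeping on top of Theorems \ref{judge} and \ref{er} and Proposition \ref{cconverse}.
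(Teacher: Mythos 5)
Your proposal is correct and follows essentially the same route as the paper, which states Theorem \ref{mai} as a direct combination of Theorem \ref{judge}, Proposition \ref{cconverse} and Theorem \ref{er} (via Remark \ref{equivalenceclass}) without writing out the details you supply. Your insistence on the global reading of ``equal or conjugate'' is the right one and matches the paper's later formalization through essential vectors, where the dichotomy is $\mathfrak{s}(M')=\mathfrak{s}(M'')$ or $\mathfrak{s}(M')=\overline{\mathfrak{s}(M'')}$ as a whole.
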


%
%
%
%
%
%
%
%
%
%
%
%


\section{The number of cospectral classes}

As an application of Section 3, in the last section we study the number of cospectral classes in $\mathscr{M}(G)$, denoted by $n_c(G)$.
It is obvious that cospectral relation forms an equivalence relation on $\mathscr{M}(G)$. Since mixed multigraphs in the same switching equivalence class are cospectral, $n_c(G)\leq n_s(G),$ where $n_s(G)$ denotes the number of switching equivalence classes in $\mathscr{M}(G)$.


Let $G$ be a connected mixed multigraph and $T$ be a spanning tree of $G$.  For any $e\in E(G)\backslash E(T)$, adding $e$ to $T$, we obtain the resultant graph containing the unique cycle, called  the {\it fundamental cycle} of~$G$  with respect to~$T$ and~$e$, denoted by $C_T(e)$. Set $\{C_T(e): e\in E(G)\backslash E(T)\}$ is denoted by $\mathbf{B}_T(G)$ and is said to be the {\it fundamental cycle basis} of~$G$  with respect to~$T$.

Using the notation fundamental cycle basis, the following theorem gives a more brief condition than Theorem \ref{mai} to characterize the switching equivalence  between two mixed multigraphs.
\begin{theorem} \label{multilast0}
Two connected mixed multigraphs with the same underlying graph are switching equivalent if and only if the weights of  all corresponding fundamental cycles of them are equal or  conjugate to each other.
\end{theorem}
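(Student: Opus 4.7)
The plan is to deduce this theorem from Theorem \ref{mai} by showing that the weights of fundamental cycles multiplicatively determine the weights of all mixed cycles. Necessity is immediate: fundamental cycles are a particular collection of mixed cycles, so Theorem \ref{mai} already forces uniform equality (or uniform conjugacy) of their corresponding weights whenever $M'$ and $M''$ are switching equivalent.

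For sufficiency, by passing to the converse of $M''$ if necessary (Proposition \ref{cconverse}), I may assume $wt_{M'}(C_T(f))=wt_{M''}(C_T(f))$ for every $f\in E(G)\setminus E(T)$. I will upgrade this to $wt_{M'}(C)=wt_{M''}(C)$ for every mixed cycle $C$, after which Theorem \ref{mai} closes the argument. The core identity I aim for is: for any mixed cycle $C$ with a chosen direction, using non-tree edges $f_1,\ldots,f_k$, there exist signs $c_1,\ldots,c_k\in\{-1,+1\}$ depending only on $G$, $T$ and the direction of $C$ (not on the assignment of arcs in $M$) such that
$$wt_M(C)=\prod_{i=1}^{k}wt_M(C_T(f_i))^{c_i}.$$
Applied in parallel to $M'$ and $M''$ and combined with the hypothesis on fundamental cycles, this yields $wt_{M'}(C)=wt_{M''}(C)$ for every mixed cycle $C$.

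To prove the identity, I would work in the integer cycle space $Z_1(G;\mathbb{Z})$. Each edge's contribution in formula (2.1) lies in $\mathfrak{W}=\{\omega^i:0\leq i\leq 5\}$ and inverts under reversal of traversal (since $\omega\overline{\omega}=1$), so the weight map $C\mapsto wt_M(C)$ extends to a group homomorphism $Z_1(G;\mathbb{Z})\to\mathfrak{W}$. The fundamental cycles $\{C_T(f):f\in E(G)\setminus E(T)\}$, oriented so that each traverses $f$ in its chosen positive direction, form a $\mathbb{Z}$-basis of $Z_1(G;\mathbb{Z})$: indeed, if one subtracts from $C$ the appropriate signed combination of the $C_T(f_i)$ chosen so that all non-tree edges cancel, the resulting chain lies entirely in $T$ and must therefore be zero. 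Applying the homomorphism then produces the identity. The main obstacle is tracking orientations consistently so that the weight map is genuinely multiplicative on the cycle space; as an alternative one may induct on the number $k$ of non-tree edges in $C$, with base case $k=1$ giving $C=C_T(f_1)$ outright, and the inductive step replacing $C$ by the closed walk obtained by splicing in $C_T(f_i)^{\pm 1}$ at a shared vertex, which has one fewer non-tree edge while altering the weight by the factor $wt_M(C_T(f_i))^{\pm 1}$.
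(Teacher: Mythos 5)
Your proposal is correct, but it reaches the sufficiency direction by a genuinely different route than the paper. The paper does not pass through ``all cycle weights agree'' at all: it proves Lemma \ref{multijudge} directly, first using Lemma \ref{mtree} to build a diagonal matrix $D\in\mathfrak{D}$ that conjugates the tree edges correctly, and then using the equality $wt_{M'}(C_T(f))=wt_{M''}(C_T(f))$ to check, one non-tree edge $f$ at a time, that the entry $N''_f$ also satisfies $N''_f=D^{-1}_{xx}N'_fD_{yy}$; the conjugate case is then handled by passing to $(M'')^T$ exactly as you do. You instead prove the intermediate statement that the weights of the fundamental cycles determine the weights of all mixed cycles --- via the observation that $C\mapsto wt_M(C)$ factors through a homomorphism $Z_1(G;\mathbb{Z})\to\mathfrak{W}$ and that the fundamental cycles form a $\mathbb{Z}$-basis, with coefficients $c_i\in\{-1,0,+1\}$ read off from the occurrence of $f_i$ in $C$ --- and then invoke Theorem \ref{mai} as a black box. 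Both arguments are sound. Your route has the merit of establishing the content of Remark \ref{Remarkend} (fundamental cycles control all cycles) as a standalone combinatorial fact rather than as a corollary of the two theorems, and your splicing induction is a legitimate fallback if you prefer to avoid orienting chains in the cycle space; the paper's route is more elementary in that it never leaves the matrix/spanning-tree picture and produces the switching matrix $D$ explicitly. One small point to make explicit in a final write-up: the hypothesis must be read uniformly (all fundamental cycle weights equal, or all conjugate), as your ``pass to the converse if necessary'' step silently assumes; a per-cycle mixture of the two alternatives would not suffice, and the paper's proof makes the same uniform reading.
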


To prove Theorem \ref{multilast0}, we need the following lemma.

\begin{lemma} \label{multijudge}
Let $M'$ and $M''$ be two connected mixed multigraphs on $n$ vertices  with the same underlying graph~$G$, and let $T$ be a  spanning tree of $G$. Then one mixed multigraph
can be obtained from the other by a three-way switching if and only if for each fundamental cycle~$C\in \mathbf{B}_T(G)$,~$wt_{M'}(C)=wt_{M''}(C)$.
\end{lemma}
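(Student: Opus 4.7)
The plan is as follows. Necessity comes almost for free: by Theorem \ref{judge}, a three-way switching preserves the weight of every mixed cycle, and the fundamental cycles in $\mathbf{B}_T(G)$ form a subfamily of all cycles, so the weights of corresponding fundamental cycles must certainly agree.

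For sufficiency, my strategy is to produce the switching diagonal matrix $D$ in two stages. First, I would apply Lemma \ref{mtree} to the two mixed trees obtained by restricting $M'$ and $M''$ to $E(T)$. This yields a diagonal matrix $D$ with entries $D_{vv}\in \mathfrak{W}$ such that the per-edge relation $N''_e = D^{-1}_{uu}N'_e D_{vv}$ holds for every tree edge $e=uv$. I then claim that the same $D$ automatically realises the three-way switching on every non-tree edge as well, and this is where the fundamental-cycle hypothesis enters.

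To carry this out, I would fix a non-tree edge $e^{*}=uv$ and consider its fundamental cycle $C_T(e^{*}) = v_1 e^{*} v_2 e_2 v_3 \cdots v_s e_s v_1$ with $v_1=u$, $v_2=v$, so that $e_2,\ldots,e_s$ are the tree edges forming the $v$-to-$u$ path in $T$. Expanding $wt_{M''}(C_T(e^{*})) = N''_{e^{*}}N''_{e_2}\cdots N''_{e_s}$ and substituting the already-established tree-edge relation $N''_{e_i} = D^{-1}_{v_iv_i}N'_{e_i}D_{v_{i+1}v_{i+1}}$ (with the convention $v_{s+1}=v_1$), the interior $D$-entries telescope, leaving an expression of the form $N''_{e^{*}}\, D^{-1}_{v_2v_2}D_{v_1v_1}\prod_{i\geq 2}N'_{e_i}$. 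Equating with $wt_{M'}(C_T(e^{*})) = N'_{e^{*}}\prod_{i\geq 2}N'_{e_i}$ and using that each $N'_{e_i}\in \mathfrak{W}$ is a nonzero complex scalar (hence invertible, with everything commuting), I cancel the common product and rearrange to obtain $N''_{e^{*}} = D^{-1}_{uu}N'_{e^{*}}D_{vv}$, as desired.

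The main point to watch is the orientation bookkeeping: one must ensure that the traversal of $C_T(e^{*})$ is interpreted consistently in $M'$ and $M''$ so that their weights agree as complex numbers (and not merely up to conjugation), but this is precisely the convention fixed in Section 3. When several non-tree edges connect the same pair of vertices, each contributes its own fundamental cycle and hence its own per-edge equation, so the multigraph setting is handled with no further work. Since every manipulation takes place in the abelian group $\mathfrak{W}$, the algebra is essentially formal and I do not foresee any substantial obstacle.
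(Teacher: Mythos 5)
Your proposal is correct and follows essentially the same route as the paper: necessity via Theorem \ref{judge}, then Lemma \ref{mtree} applied to the spanning trees to build $D$ on the tree edges, followed by the telescoping substitution of the tree-edge relations into the fundamental-cycle weight equation to extract $N''_{f}=D^{-1}_{xx}N'_{f}D_{yy}$ for each non-tree edge. The only difference is cosmetic (you place the non-tree edge first in the cycle traversal rather than last), and your remarks on orientation conventions and parallel non-tree edges are consistent with the paper's setup.
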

\begin{proof}
The necessity follows from  Theorem \ref{judge}, and we only prove the sufficiency. Denote by~$N(M')=[N'_{uv}],~N(M'')=[N''_{uv}]$.
Lemma~\ref{mtree} implies that there exists a diagonal matrix~$D=[D_{vv}]$  such that for each edge~$e=uv\in E(T)$,
$N''_{e}=D^{-1}_{uu}N'_{e}D_{vv}$. By Definition~\ref{sixm}, it is sufficient to show that for each edge~$f=xy\in E(G)\backslash E(T)$, $N''_f=D^{-1}_{xx}N'_{f}D_{yy}$.~Denote by $C_T(f)= v_1e_{1}v_2\cdots e_{s-1} x f y$, where $x=v_s, y=v_1$.~The condition implies that $wt_{M'}(C_T(f))=wt_{M''}(C_T(f))$, equivalently,
$$ N'_{e_{1}}\cdots N'_{e_{s-1}}N'_{f}
= N''_{e_{1}}\cdots N''_{e_{s-1}}N''_{f}.$$
For any $i=1,2, \ldots, s-1$, noting that $e_i \in E(T)$, we have
$$N''_{e_{i}}=D^{-1}_{v_{i}v_{i}}N'_{e_i}D_{v_{i+1}v_{i+1}},$$
which yields $N''_f=D^{-1}_{v_{s}v_{s}}N'_{f}D_{v_1v_1}$. The result follows.
\end{proof}

\noindent\textbf{Proof of Theorem \ref{multilast0}.}  The necessity follows from  Theorem \ref{mai}, and we only prove the sufficiency.
If for each fundamental cycle~$C\in \mathbf{B}_T(G)$,~$wt_{M'}(C)=wt_{M''}(C)$, then Lemma~\ref{multijudge} implies that the conclusion holds. If for each fundamental cycle~$C\in \mathbf{B}_T(G)$,~$wt_{M'}(C)= \overline{wt_{M''}(C)}$, then, taking the converse of~$M''$, we obtain the resultant graph, denoted by~$(M'')^T.$  By Proposition~\ref{cconverse},
for every fundamental cycle~$C\in \mathbf{B}_T(G)$,~$wt_{(M'')^T}(C)= \overline{wt_{M''}(C)}=wt_{M'}(C).$
The result follows from Remark \ref{equivalenceclass} and Lemma~\ref{multijudge}.
$\blacksquare$

\begin{remark} \label{Remarkend}
Let $M'$ and $M''$ be two connected mixed multigraphs with the same underlying graph~$G$, and let $T$ be a  spanning tree of $G$. Then Theorems~\ref{mai} and~\ref{multilast0} imply that,
$wt_{M'}(C)=wt_{M''}(C)$ $($$wt_{M'}(C)= \overline{wt_{M''}(C)}$$)$  for each cycle~$C$ in  $G$
if and only if $wt_{M'}(C)=wt_{M''}(C)$ $($$wt_{M'}(C)= \overline{wt_{M''}(C)}$$)$ for each fundamental cycle~$C\in \mathbf{B}_T(G)$.
\end{remark}

At the end of this paper, we provide an  upper bound of the number of switching equivalence classes in the set of mixed multigraphs with the same underlying graph, which is naturally an  upper bound of the number of cospectral classes. Similar results for Hermitian adjacency matrix of the first kind can be referred to Wang and Yuan  \cite{Wangy}.

Let $G$ be a  connected~$k$-cyclic undirected multigraph, and let $T$ be a  spanning tree of $G$. Denote
$\mathbf{B}_T(G):=\{C_1,C_2,\cdots,C_k\}$ by the fundamental cycle basis of $G$.
For any $M\in\mathscr{M}(G)$ with the given cycle direction, define the ordered sequence~$\mathfrak{s}(M)=(wt(C_1), wt(C_2),\ldots, wt(C_k))$ as the {\it essential vector} of $M$ which is dependent of $T$ and the given cycle direction. Let $M'\in\mathscr{M}(G)$ and $M''\in\mathscr{M}(G),$ and recall that we always assume that  all corresponding mixed cycles of them have the same cycle direction. Two vectors $\mathfrak{s}(M')$ and $\mathfrak{s}(M'')$ are called {\it conjugate equivalent} if $\mathfrak{s}(M')=\mathfrak{s}(M'')$ or $\mathfrak{s}(M')= \overline{\mathfrak{s}(M'')}.$

Let $\mathcal{S}(G)$ be the set  of essential vectors of all mixed multigraphs in $\mathscr{M}(G)$. It is easy to see that the conjugate equivalence naturally  forms an equivalence relation on $\mathcal{S}(G)$. Further,
Theorem \ref{multilast0} implies that determining $n_s(G)$ is equivalent to determining the value of conjugate equivalence classes in $\mathcal{S}(G)$. Thus, the following upper bound of $n_s(G)$ can be obtained immediately.

\begin{theorem} \label{mai00}
For a connected~$k$-cyclic $(k\geq 1)$ undirected multigraph~$G$, $n_c(G)\leq n_s(G) \leq \frac{6^{k}}{2}+2^{k-1}$.
\end{theorem}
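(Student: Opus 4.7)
The plan is to show that the essential vector $\mathfrak{s}(M)$ lives in the finite set $\mathfrak{W}^{k}$, count the number of conjugate equivalence classes on that ambient set, and then invoke Theorem~\ref{multilast0} to translate this count into an upper bound on $n_{s}(G)$.

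First I would observe that for any mixed cycle $C$ with $f$ forward and $b$ backward arcs (for the chosen cycle direction), its weight is $wt(C)=\omega^{f}\overline{\omega}^{b}=\omega^{f-b}$, and since $\omega$ is a primitive sixth root of unity, $wt(C)\in \mathfrak{W}=\{\omega^{i}:0\le i\le 5\}$. Consequently, for any spanning tree $T$ and any $M\in\mathscr{M}(G)$, the essential vector $\mathfrak{s}(M)=(wt(C_{1}),\ldots,wt(C_{k}))$ belongs to $\mathfrak{W}^{k}$, and therefore $\mathcal{S}(G)\subseteq \mathfrak{W}^{k}$, a set of size $6^{k}$.

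Next I would count the conjugate equivalence classes in $\mathfrak{W}^{k}$ (which dominates the count on the subset $\mathcal{S}(G)$). The conjugation map $v\mapsto \bar{v}$ is an involution on $\mathfrak{W}^{k}$. A vector $v=(v_{1},\ldots,v_{k})$ is fixed by conjugation iff every coordinate is real, i.e.\ each $v_{i}\in\mathfrak{W}\cap\mathbb{R}=\{1,-1\}$; there are exactly $2^{k}$ such vectors. The remaining $6^{k}-2^{k}$ vectors pair up into classes of size two. Hence the total number of conjugate equivalence classes in $\mathfrak{W}^{k}$ equals
\[
2^{k}+\frac{6^{k}-2^{k}}{2}=\frac{6^{k}}{2}+2^{k-1}.
\]

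Finally, I would put the pieces together: by Theorem~\ref{multilast0}, two mixed multigraphs in $\mathscr{M}(G)$ are switching equivalent if and only if their essential vectors (with respect to a fixed spanning tree $T$ and a fixed choice of cycle directions) are conjugate equivalent, so $n_{s}(G)$ equals the number of conjugate equivalence classes in $\mathcal{S}(G)$, which is bounded above by the same count on $\mathfrak{W}^{k}$. Combined with the already-established inequality $n_{c}(G)\le n_{s}(G)$, this yields $n_{c}(G)\le n_{s}(G)\le \tfrac{6^{k}}{2}+2^{k-1}$. There is no real obstacle here; the only point that deserves care is confirming that cycle weights are always sixth roots of unity (which is immediate from $\omega^{6}=1$), so that the ambient set is indeed $\mathfrak{W}^{k}$ rather than something larger.
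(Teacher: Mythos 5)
Your proof is correct and follows essentially the same route as the paper: reduce via Theorem~\ref{multilast0} to counting conjugate equivalence classes of essential vectors inside $\mathfrak{W}^{k}$. The only difference is bookkeeping --- the paper sums $\frac{1}{2}\binom{k}{i}2^{k+i}$ over the number $i$ of imaginary coordinates, while you count the $2^{k}$ fixed points of the conjugation involution and pair up the rest; both computations yield $\frac{6^{k}}{2}+2^{k-1}$.
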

\begin{proof}
Denote by~$a_i$ the number of conjugate equivalence classes in $\mathcal{S}(G)$ whose essential
vectors have exactly $i$ imaginary entries at the same position.
If $i=0$, then the corresponding essential vectors are real vectors and thus~$a_0 \leq 2^k$.
If~$1\leq i \leq k$,  $a_i \leq \frac{1}{2}\binom{k}{i} 2^{k+i}$. Then
$$n_s(G)=\sum_{i=0}^ka_i \leq 2^k+ \sum_{i=1}^k \frac{1}{2}\binom{k}{i} 2^{k+i}= \frac{6^{k}}{2}+2^{k-1}.$$
The result follows.
\end{proof}


\end{document}